\theoremstyle{plain}
\newtheorem{theorem}[equation]{Theorem}
\newtheorem{cor}[equation]{Corollary}
\newtheorem{prop}[equation]{Proposition}
\newtheorem{lemma}[equation]{Lemma}
\theoremstyle{definition}
\newtheorem{definition}[equation]{Definition}
\newtheorem{example}[equation]{Example}
\newtheorem{remark}[equation]{Remark}
\numberwithin{equation}{section}
\newcommand{\Deltaop}{\Delta^{op}}
\newcommand{\Hom}{\text{Hom}}
\newcommand{\SSets}{\mathcal{SS}ets}
\newcommand{\Sets}{\mathcal Sets}
\newcommand{\ob}{\text{ob}}
\newcommand{\id}{\text{id}}
\newcommand{\Tgam}{\mathcal T_{G\mathcal{AM}}}
\newcommand{\Gammaop}{\Gamma^{op}}
\newcommand{\GGammaop}{G\Gamma^{op}}
\newcommand{\Alg}{\mathcal Alg}
\newcommand{\Algt}{\mathcal Alg^\mathcal T}
\newcommand{\sk}{\text{sk}}
\newcommand{\Fin}{\mathcal Fin}
\newcommand{\Deltaactgamma}{\Delta \circlearrowright \Gamma}
\begin{document}

\title{Diagrams encoding group actions on $\Gamma$-spaces}

\author[J.E.\ Bergner and P.\ Hackney]{Julia E. Bergner and Philip Hackney}

\address{Department of Mathematics, University of California, Riverside, CA 92521}
\email{bergnerj@member.ams.org}

\address{Matematiska institutionen\\ Stockholms Universitet\\ 106 91 Stockholm \\ Sweden}
\email{hackney@math.su.se}

\subjclass[2010]{55P47, 55U35, 18G30, 18G55}

\keywords{$\Gamma$-spaces, group actions, Segal conditions}

\thanks{The first-named author was partially supported by NSF grant DMS-1105766.}

\begin{abstract}
We introduce, for any group $G$, a category $G\Gamma$ such that diagrams $G\Gamma \rightarrow \SSets$ satisfying a Segal condition correspond to infinite loop spaces with a $G$-action. We also consider diagrams which encode group actions on infinite loop spaces where the group may vary.
\end{abstract}

\maketitle

\date{\today}

\section{Introduction}

In \cite{inverses} and \cite{simpmon}, we looked at diagrammatic ways to approach simple algebraic structures as diagrams satisfying some kind of Segal condition. The terminology comes from the fact that such conditions were first investigated by Segal \cite{segal}. In \cite{simpmon}, we show that diagrams $X \colon \Deltaop \rightarrow \SSets$ such that $X_0 = \Delta[0]$ can be regarded a simplicial monoids when the Segal condition holds either strictly or up to homotopy. In contrast, Segal shows that for diagrams $Y \colon \Gammaop \rightarrow \SSets$ with $Y_0 = \Delta[0]$, those that satisfy the Segal condition strictly are equivalent to simplicial abelian monoids, whereas those satisfying it only up to homotopy, simply called $\Gamma$-spaces, can be regarded as infinite loop spaces, at least with an additional group-like condition.

In \cite{inverses}, we built on work of Bousfield \cite{bous} to encode group and abelian group structures, not by changing the diagram shape, but by modifying the Segal condition to one we call a Bousfield-Segal condition.\footnote{Note the change-of-diagram approach in that paper is incorrect; see \cite{inverseserror}.} In particular, for $\Gamma$-spaces this approach combines the Segal condition and the group-like condition into a single criterion.

Here, we use an approach from \cite{segalop} to form categories built from multiple copies of $\Gamma$ so that its diagrams of simplicial sets satisfying the up-to-homotopy Segal condition correspond to infinite loop spaces with a $G$-action, for a specified discrete group $G$. We expect that these ideas are well-known to experts, but we bring together the approaches of \cite{bous} and \cite{segalop} in a unified treatment.

In the last section, we discuss the global case, where the group $G$ varies and can be taken to be a simplicial group, analogously to what we do for group actions on Segal operads in \cite{segalop}. This approach builds on our previous work on group actions on categories and operads in \cite{catact}.

It should be noted that we only consider group actions here, and so the $\Gamma$-spaces which $G$-action which we obtain correspond to naive rather than genuine $G$-spectra. An approach to equivariant infinite loop space theory which utilizes the category of all finite pointed $G$-sets is used in \cite{san}, \cite{shim} and produces genuine $G$-spectra; some related work in progress is given in \cite{mmo}.  The case of global equivariant spectra, where the group varies, has been considered by Bohmann \cite{boh} and a comprehensive treatment is being developed by Schwede \cite{schwede}. While the structures we consider do not encode the full strength of genuine equivariant stable homotopy theory, we find the diagrammatic structures to be of interest. In the case of the action of a fixed group $G$, we need not restrict to finite groups but may consider any discrete group. In the global case, our approach suggests a method for understanding actions by up-to-homotopy simplicial groups as well as strict ones.

\section{Background}
In this section we give a review of Segal's category $\Gamma$ and some relevant results. By $\SSets$, we denote the category of simplicial sets, or functors $\Deltaop \rightarrow \Sets$ with the model structure equivalent to the usual model category of topological spaces. Here the simplicial indexing category $\Delta$ has objects the finite ordered sets and morphisms are the order-preserving maps.

\subsection{The category \texorpdfstring{$\Gamma$}{Gamma} and Segal maps}

We begin with the original definition of the category $\Gamma$, as given by Segal in \cite{segal}. Its objects are representatives of isomorphism classes of finite sets, and a morphism $S \rightarrow T$ is given by a map $\theta:S \rightarrow \mathcal P(T)$ such that $\theta(\alpha)$ and $\theta(\beta)$ are disjoint whenever $\alpha \neq \beta$. (Here $\mathcal P(T)$ is the power set of the set $T$.) The opposite category $\Gammaop$ has the following description of its own: the category with objects ${\bf n}=\{0, 1, \ldots, n\}$ for $n \geq 0$ and morphisms ${\bf m} \rightarrow {\bf n}$ such that $0 \mapsto 0$.

In $\Gamma^{op}$, there are maps $\varphi_{n,k} \colon {\bf n} \rightarrow {\bf 1}$ for any $1 \leq k \leq n$ given by, for any $0 \leq i \leq n$,
\[ \varphi_{n,k}(i) =
\begin{cases}
1 & \text{ if } i=k \\
0 & \text{ if } i \neq k.
\end{cases} \]
Given any functor $X \colon \Gamma^{op} \rightarrow \SSets$, we get induced maps $\varphi_{n,k} \colon X({\bf n}) \rightarrow X({\bf 1})$. The disjoint union
\[ \varphi_n = \coprod_{k=1}^n \varphi_{n,k} \] is called a \emph{Segal map}.

\begin{definition}
A $\Gamma$-\emph{space} $X$ is a functor $\Gamma^{op} \rightarrow \SSets$ such that $X({\bf 0}) \cong \Delta[0]$ and the Segal map $\varphi_n \colon X({\bf n}) \rightarrow (X({\bf 1}))^n$ is a weak equivalence of simplicial sets for each $n \geq 2$. If $X({\bf 0}) \cong \Delta[0]$ and if each Segal map $\varphi_n \colon X({\bf n}) \rightarrow (X({\bf 1}))^n$ is an
isomorphism, then $X$ is a \emph{strict} $\Gamma$-\emph{space}.
\end{definition}

Note that our definition differs from the original, in that even for non-strict $\Gamma$-spaces we require that $X({\bf 0})$ be a single point, rather than simply equivalent to one.

The following two results are due to Segal \cite{segal}.

\begin{prop} \label{simpabmon}
The category of strict $\Gamma$-spaces is equivalent to the category of simplicial abelian monoids.
\end{prop}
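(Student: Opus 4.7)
The plan is to construct an equivalence of categories by building functors in both directions and checking they are mutually inverse (in fact, inverse isomorphisms). Define $F$ from strict $\Gamma$-spaces to simplicial abelian monoids by $F(X) = X(\mathbf{1})$, where the simplicial structure is inherited. The multiplication is the composite
\[ X(\mathbf{1}) \times X(\mathbf{1}) \xleftarrow{\varphi_2} X(\mathbf{2}) \xrightarrow{X(\nabla)} X(\mathbf{1}), \]
where $\nabla \colon \mathbf{2} \to \mathbf{1}$ in $\Gamma^{op}$ sends both $1$ and $2$ to $1$, and $\varphi_2$ is inverted using the strict Segal condition. The unit map $\Delta[0] = X(\mathbf{0}) \to X(\mathbf{1})$ is induced by the unique morphism $\mathbf{0} \to \mathbf{1}$ in $\Gamma^{op}$. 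Associativity follows from comparing the two ways of factoring $\mathbf{3} \to \mathbf{1}$ through $\mathbf{2}$, once $X(\mathbf{3})$ has been identified with $X(\mathbf{1})^3$ via $\varphi_3$. Commutativity is the crucial ingredient that distinguishes $\Gamma^{op}$ from $\Deltaop$: applying $X$ to the swap automorphism $\sigma \colon \mathbf{2} \to \mathbf{2}$ of $\Gamma^{op}$ which transposes $1$ and $2$ shows that $\mu \circ \tau = \mu$, where $\tau$ is the symmetry of the product.

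In the other direction, define $G$ from simplicial abelian monoids to strict $\Gamma$-spaces by $G(A)(\mathbf{n}) = A^n$ (with $A^0 = \Delta[0]$). For a morphism $\theta \colon \mathbf{m} \to \mathbf{n}$ in $\Gamma^{op}$, set
\[ G(A)(\theta)(a_1, \ldots, a_m) = (b_1, \ldots, b_n), \qquad b_j = \sum_{\substack{1 \le i \le m \\ \theta(i) = j}} a_i, \]
where an empty sum is interpreted as the unit of $A$. The Segal maps $\varphi_n$ for $G(A)$ are identity maps by construction, so the strict Segal condition is automatic.

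Finally, check that $F$ and $G$ are mutually inverse. The composite $F \circ G$ is the identity on the nose, since $F(G(A)) = G(A)(\mathbf{1}) = A$ with the multiplication read off from the map $\nabla$, which by the defining formula is precisely the multiplication of $A$. For $G \circ F$, the Segal isomorphisms $\varphi_n \colon X(\mathbf{n}) \xrightarrow{\cong} X(\mathbf{1})^n = G(F(X))(\mathbf{n})$ assemble into a natural isomorphism, once we verify naturality in morphisms $\theta \colon \mathbf{m} \to \mathbf{n}$ of $\Gamma^{op}$; this reduces to the observation that any $\theta$ factors through composites of the fold $\nabla$, the swap $\sigma$, and unit and projection maps, all of whose behavior is forced by the abelian monoid axioms.

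The main obstacle is verifying that $G$ is a well-defined functor, i.e., that $G(A)(\theta\psi) = G(A)(\theta) \circ G(A)(\psi)$ for composable morphisms in $\Gamma^{op}$. This is where one genuinely uses all three monoid axioms simultaneously: associativity to collapse nested sums, commutativity to reorder sums indexed by fibers, and the unit law to absorb contributions of elements mapping to the basepoint. All remaining verifications are routine diagram chases once the comparison maps and formulas are in place.
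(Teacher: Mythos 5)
Your construction is correct in substance, but it is not the route the paper takes. The paper gives no direct proof of this proposition: it attributes the result to Segal and indicates (in the subsection on algebraic theories) that it ``can be established by comparing the categories of simplicial abelian monoids and simplicial abelian groups to the categories of algebras over the theories $\mathcal T_{\mathcal{AM}}$ of abelian monoids and $\mathcal T_{\mathcal{AG}}$ of abelian groups'' --- that is, one identifies strict $\Gamma$-spaces with product-preserving functors out of the full subcategory of abelian monoids on the finitely generated free objects $F_n$, and identifies such algebras with simplicial abelian monoids. That abstraction is precisely the template the paper reuses in Section 3 for the equivariant generalization, via the functor $\GGammaop \rightarrow \Tgam$ sending ${\bf n}_G \mapsto G \times F_n$; the payoff is that the same argument survives the insertion of $G$. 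Your hands-on proof --- $F(X)=X({\bf 1})$ with multiplication $X(\nabla)\circ\varphi_2^{-1}$, commutativity from the swap $\sigma$, and $G(A)({\bf n})=A^n$ with fiberwise sums --- is the concrete shadow of the same comparison: your formula for $G(A)(\theta)$ is exactly the effect of the free abelian monoid homomorphism $F_m \rightarrow F_n$ corresponding to $\theta$. What your version buys is a self-contained, formula-level proof that makes explicit where each monoid axiom is used.

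One step is misstated, though not fatally. A general $\theta \colon {\bf m} \rightarrow {\bf n}$ in $\Gammaop$ does \emph{not} factor as a composite of folds, swaps, units, and projections alone: among those generators the only maps with codomain ${\bf 2}$ are powers of $\sigma$, so for instance $\nabla \vee \id \colon {\bf 3} \rightarrow {\bf 2}$ is not such a composite. The generation statement holds only if you also close up under wedge sums of generators, and then you must still control what $X$ does on wedge decompositions, which is exactly the content of the Segal isomorphisms rather than something automatic. It is cleaner to verify the naturality square for $\varphi_\bullet$ directly: the $j$-th component of either composite applied to $x \in X({\bf m})$ is governed by the collapse map $\chi_S \colon {\bf m} \rightarrow {\bf 1}$ sending $S = \theta^{-1}(j)$ to $1$ and everything else to $0$, so naturality reduces to the lemma that $X(\chi_S)$ equals the sum, in the monoid $X({\bf 1})$, of the coordinate maps $X(\varphi_{m,i})$ for $i \in S$; this is proved by induction on $|S|$ using the strict Segal isomorphisms. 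With that lemma in place, the rest of your argument (functoriality of $G$, the two composite identifications) goes through as you describe.
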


\begin{prop}
The category of $\Gamma$-spaces $X$ such that $\pi_0(X({\bf 1}))$ is a group is equivalent to the category of infinite loop spaces.
\end{prop}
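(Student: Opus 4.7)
The plan is to follow Segal's original argument: extend each group-like $\Gamma$-space to a functor on all pointed simplicial sets, evaluate on spheres to build a spectrum, and use the group-like hypothesis to see that this spectrum is an $\Omega$-spectrum. I would first identify $\Gammaop$ with the category of finite pointed sets, with ${\bf n}$ corresponding to $\{0,1,\ldots,n\}$ pointed at $0$. Given a $\Gamma$-space $X$, I would extend $X$ to a functor $\bar X$ from pointed simplicial sets to simplicial sets by left Kan extension along this inclusion, with concrete model the diagonal formula $\bar X(K)_q = X(K_q)_q$. The Segal-map hypothesis on $X$ then translates into the statement that $\bar X$ carries wedges to products up to weak equivalence---the ``linearity'' needed for a spectrum assembly construction.

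Next, I would set $E_n = \bar X(S^n)$ for a fixed simplicial model of the sphere and read off structure maps $S^1 \wedge E_n \to E_{n+1}$ from functoriality of $\bar X$. A standard connectivity argument, using that $S^n$ is simply connected for $n \geq 1$ together with the Segal condition, shows that the adjoint $E_n \to \Omega E_{n+1}$ is a weak equivalence for all $n \geq 1$. For $n = 0$, however, the map $X({\bf 1}) \to \Omega \bar X(S^1)$ is a priori only a group completion, and the hypothesis that $\pi_0 X({\bf 1})$ is a group is exactly what upgrades it to a weak equivalence, so that $\{E_n\}_{n \geq 0}$ forms an $\Omega$-spectrum and hence $E_0 = X({\bf 1})$ inherits an infinite loop space structure.

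For the reverse direction, given a connective $\Omega$-spectrum $E$, I would build a strict $\Gamma$-space $X_E$ by sending ${\bf n}$ to the space of pointed maps from ${\bf n}$ into $E_0$, so that $X_E({\bf n}) \cong (E_0)^n$ and the Segal condition holds strictly; clearly $\pi_0 X_E({\bf 1}) = \pi_0 E_0$ is a group. One then verifies that the two constructions are mutually inverse equivalences, at least at the level of homotopy categories, by identifying $\bar X_E(S^n)$ with $E_n$ and conversely identifying the restriction of $\bar X$ back to finite pointed sets with the original $X$, up to the appropriate notion of weak equivalence.

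The main obstacle is the group-completion step at the bottom of the spectrum: showing that $\pi_0 X({\bf 1})$ being a group forces $X({\bf 1}) \to \Omega \bar X(S^1)$ to be a weak equivalence. This requires identifying $\bar X(S^1)$ with the classifying space of the homotopy-commutative $H$-space $X({\bf 1})$, built via the bar construction on the multiplication supplied by the Segal equivalence $X({\bf 2}) \simeq X({\bf 1}) \times X({\bf 1})$, and then invoking the classical fact that $M \to \Omega BM$ is a weak equivalence exactly when $\pi_0 M$ is already a group.
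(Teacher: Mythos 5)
Your forward direction is essentially Segal's argument, which is what the paper intends (it cites \cite{segal} rather than reproducing the proof; note that the paper's own later proof for $G\Gamma$-spaces uses Segal's classifying-space construction via the bi-$\Gamma$-space $\widehat X({\bf m},{\bf n})=X({\bf m}\wedge{\bf n})$, and your $\bar X(S^1)$ is exactly that $BX({\bf 1})$ in different packaging). You correctly isolate the crux: the structure maps $E_n \to \Omega E_{n+1}$ are equivalences for $n \geq 1$ regardless of the hypothesis, and the group-like condition on $\pi_0 X({\bf 1})$ is used only at the bottom, via the fact that $M \to \Omega BM$ is an equivalence precisely when $\pi_0 M$ is a group. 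One slip there: $S^1$ is not simply connected, and simple connectivity of $S^n$ is not what is needed; the relevant point is that $\bar X(S^n)$ is \emph{connected} for $n \geq 1$ (it is, since $S^n$ has only the basepoint in simplicial degree $0$ and $X({\bf 0}) \cong \Delta[0]$), so $\pi_0 \bar X(S^n)$ is automatically a group and the group-completion argument applies at every level above the bottom.

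The reverse direction, however, contains a genuine error. In the paper's conventions $\Gammaop$ is the category of finite pointed sets ${\bf n}$ with basepoint-preserving maps, and a $\Gamma$-space is a \emph{covariant} functor on it; but ${\bf n} \mapsto \Map_*({\bf n}, E_0)$ is contravariant. To make ${\bf n} \mapsto (E_0)^n$ covariant, a pointed map $f \colon {\bf m} \to {\bf n}$ must induce $(E_0)^m \to (E_0)^n$ by multiplying the coordinates in each fiber $f^{-1}(j)$, which requires a strictly associative and commutative multiplication on $E_0$. This is no accident: by the equivalence between strict $\Gamma$-spaces and simplicial abelian monoids (Proposition \ref{simpabmon}), if your strict $X_E$ existed for every connective $\Omega$-spectrum then every infinite loop space would underlie a simplicial abelian monoid, hence (being group-like) would split as a product of Eilenberg-MacLane spaces --- false already for the sphere spectrum, where $E_0 \simeq QS^0$ has nontrivial $k$-invariants. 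The repair is to abandon strictness in this direction: take, say, $X_E({\bf n})$ to be the mapping space of spectra from $\Sigma^\infty({\bf n})$ (the suspension spectrum of the pointed set ${\bf n}$) into a fibrant model of $E$; this is covariant in ${\bf n}$, its Segal maps are weak equivalences (not isomorphisms) because $\Sigma^\infty({\bf n})$ is a wedge of $n$ sphere spectra, and $\pi_0 X_E({\bf 1}) \cong \pi_0 E_0$ is a group. One must also arrange $X_E({\bf 0}) \cong \Delta[0]$ on the nose, since the paper's definition of a $\Gamma$-space demands it strictly, and then verify the two constructions are inverse on homotopy categories as you indicate.
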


Some constructions for $\Gamma$ can also be applied to $\Delta$. Segal defines a functor $\Delta \rightarrow \Gamma$ as follows. The object $[n]$ is sent to ${\underline n} = \{ 1,\dots, n\}$ for each $n \geq 0$, and a map $f \colon [m] \rightarrow [n]$ is sent to the map $\theta \colon {\underline m} \rightarrow {\underline n}$ given by $\theta(i)=\{j \in {\underline n} \mid f(i-1)<j \leq f(i)\}$.

We can also define Segal maps for simplicial spaces, or functors $X \colon \Deltaop \rightarrow \SSets$; here also we restrict to the case where $X_0 \cong \Delta[0]$. The Segal map $X_n \rightarrow (X_1)^n$ is induced by the maps $\alpha^{n,k}:[1] \rightarrow [n]$ in $\Delta$ for each $0 \leq k \leq n-1$, where $\alpha^{n,k}(0)=k$ and $\alpha^{n,k}(1)=k+1$. Then applying the functor $\Delta \rightarrow \Gamma$, these maps $\alpha^{n,k}$ are sent to $\varphi^{op}_{n,k}$.

\subsection{Bousfield-Segal maps}

Following an idea of Bousfield \cite{bous}, in $\Delta$ we define the maps $\gamma^{n,k}:[1] \rightarrow [n]$ given by $0 \mapsto 0$ and $1 \mapsto k+1$ for all $0 \leq k < n$. Restricting to the case where $X_0 \cong \Delta[0]$, we can define the \emph{Bousfield-Segal map} $X_n \rightarrow (X_1)^n$ induced by these maps.
When such an $X$ satisfies the condition that the Bousfield-Segal maps are weak equivalences of simplicial sets for all $n \geq 2$, we call it a \emph{Bousfield-Segal group}; the name is justified by the fact that it is equivalent to a simplicial group \cite{inverses}. A comparison to the usual Segal condition is given in figure~\ref{F:onlyonefigure}; the idea is to define a group in terms of the binary operation $a,b\mapsto ab^{-1}$.

\begin{figure}[b]
\def\svgwidth{3.5in}
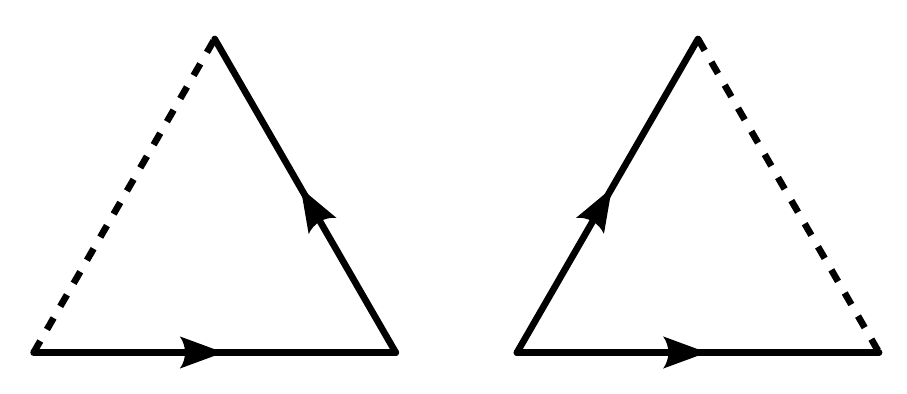
\caption{Left: Segal condition, Right: Bousfield-Segal condition}\label{F:onlyonefigure}
\end{figure}

Translating the maps $\gamma^{n,k}$ from $\Delta$ to $\Gamma$, we get maps $\delta^{n,k} \colon {\bf 1} \rightarrow {\bf n}$, one for each $1 \leq k \leq n$.

\begin{definition} \label{boussegal}
A \emph{strict Bousfield} $\Gamma$-\emph{space} is a functor $X:\Gamma^{op} \rightarrow \SSets$ such that $X({\bf 0}) \cong \Delta[0]$ and the maps $X({\bf n}) \rightarrow X({\bf 1})^n$ induced by the maps $\delta^{n,k}$ for all $1 \leq k \leq n$ are isomorphisms for all $n \geq 2$. Similarly, a \emph{(homotopy) Bousfield} $\Gamma$-\emph{space} has $X({\bf 0}) \cong \Delta[0]$ and the maps $X({\bf n}) \rightarrow X({\bf 1})^n$ weak equivalences of simplicial sets for $n \geq 2$.
\end{definition}

\begin{prop} \cite[7.2]{inverses} \label{simpabgp}
The category of strict Bousfield $\Gamma$-spaces is equivalent to the category of simplicial abelian groups.
\end{prop}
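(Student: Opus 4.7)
The plan is to construct functors $F$ and $G$ between the category of simplicial abelian groups and the category of strict Bousfield $\Gamma$-spaces, and to check that they are inverse equivalences of categories. In one direction, given a simplicial abelian group $A$, define $F(A) \colon \Gammaop \to \SSets$ by $F(A)({\bf n}) := A^n$ (with $A^0 = \Delta[0]$), where a morphism $f \colon {\bf n} \to {\bf m}$ in $\Gammaop$ acts by $(a_1, \ldots, a_n) \mapsto (b_1, \ldots, b_m)$ with $b_i := \sum_{f(j) = i} a_j$. Functoriality is routine. To verify that $F(A)$ is a strict Bousfield $\Gamma$-space, note that the pointed set map $\delta^{n,k}$ sends $\{1, \ldots, k\}$ to $1$ and the rest to $0$, so the combined Bousfield-Segal map $F(A)({\bf n}) \to F(A)({\bf 1})^n$ is the partial-sum map $(a_1, \ldots, a_n) \mapsto (a_1, a_1 + a_2, \ldots, a_1 + \cdots + a_n)$, whose inverse is the map of successive differences; this inverse exists precisely because $A$ has additive inverses.

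In the other direction, given a strict Bousfield $\Gamma$-space $X$, set $G(X) := X({\bf 1})$, equipped with a distinguished element $0 \in X({\bf 1})$ coming from the unique map $X({\bf 0}) \to X({\bf 1})$. Using the Bousfield-Segal isomorphism $X({\bf 2}) \cong X({\bf 1})^2$, define $\tau \colon X({\bf 1})^2 \to X({\bf 1})$ by letting $\tau(a, b) := X(\varphi_{2,2})(\sigma)$, where $\sigma \in X({\bf 2})$ is the unique element with $X(\delta^{2,1})(\sigma) = a$ and $X(\delta^{2,2})(\sigma) = b$. A computation on the motivating example $F(A)$ shows $\tau(a, b) = b - a$; accordingly we define $-a := \tau(a, 0)$ and $a + b := \tau(\tau(a, 0), b)$.

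The main obstacle is to verify that these operations give $X({\bf 1})$ the structure of a simplicial abelian group. Each group axiom corresponds to a commutative diagram among morphisms of $\Gammaop$ relating the $\varphi$'s, $\delta$'s, and their compositions; pushing forward along $X$ and using the Bousfield-Segal iso at the appropriate level ${\bf n}$ translates such a diagram into the desired algebraic identity. Commutativity follows from the $\mathbb{Z}/2$-symmetry swapping $1 \leftrightarrow 2$ in ${\bf 2}$; the unit and inverse laws are visible at level ${\bf 2}$; associativity requires a calculation in $X({\bf 3}) \cong X({\bf 1})^3$ comparing the two ways to bracket a ternary operation through morphisms ${\bf 3} \to {\bf 2} \to {\bf 1}$ in $\Gammaop$.

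Finally, $G \circ F$ is manifestly the identity on objects, and a short check on morphisms completes that direction. For $F \circ G \cong \id$, define a natural isomorphism $F(G(X)) \to X$ whose component at ${\bf n}$ is the inverse of the Bousfield-Segal iso $X({\bf n}) \to X({\bf 1})^n$; naturality in $\Gammaop$ follows from the construction of the group operations on $G(X)$ via pointed set maps together with functoriality of $X$, essentially repeating the analysis of the previous step.
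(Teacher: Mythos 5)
Your proposal is correct in outline but takes a genuinely different route from the paper, which offers no direct argument at all: it cites \cite[7.2]{inverses} and remarks that the proof is established by comparing strict Bousfield $\Gamma$-spaces with the category of product-preserving functors on the algebraic theory $\mathcal T_{\mathcal{AG}}$ of abelian groups, whose objects are the finitely generated free abelian groups. In that approach the essential observation is the same one you make for $F(A)$: under $\Gammaop \rightarrow \mathcal T_{\mathcal{AG}}$ the maps $\delta^{n,k}$ go to the partial-sum maps $\mathbb{Z}^n \rightarrow \mathbb{Z}$, and the unipotent change-of-basis matrix is invertible over $\mathbb{Z}$ (though not over $\mathbb{N}$, which is why the strict Bousfield condition singles out groups among monoids); the theory machinery then delivers the equivalence with simplicial abelian groups without any by-hand verification of axioms. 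Your explicit construction is more elementary and self-contained, but it shifts all of that packaged work into the paragraph you rightly flag as the main obstacle, and the sketch there compresses real labor. In particular, the swap $\chi$ on ${\bf 2}$ satisfies $\delta^{2,1}\circ\chi = \varphi_{2,2}$ and $\delta^{2,2}\circ\chi = \delta^{2,2}$, so in Bousfield coordinates $X(\chi)(a,b) = (\tau(a,b),b)$ and $\chi^2 = \mathrm{id}$ yields the identity $\tau(\tau(a,b),b)=a$; setting $b=0$ gives the unit law directly, but as a commutativity statement this identity reads $bab^{-1}=a$ when $\tau(a,b)=ba^{-1}$, so it forces abelianness only \emph{in combination with} the associativity-type identities extracted at level ${\bf 3}$ --- your attribution of commutativity to the $\mathbb{Z}/2$-symmetry alone is right in spirit but not a one-step deduction. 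Similarly, naturality of $F(G(X)) \rightarrow X$ is more than a formal repetition: one must show each characteristic map $\chi_S \colon {\bf n} \rightarrow {\bf 1}$ acts in Bousfield coordinates as the $|S|$-fold sum (note that for $S=\{1,3\}$ at level ${\bf 3}$ this operation is not a priori a $\tau$-composite, and requires an induction on $|S|$ using the product decomposition), after which arbitrary morphisms $f$ follow because each $\delta^{m,k}\circ f$ is again a characteristic map. With those verifications carried out, your argument is a complete and valid alternative; what the paper's route buys is precisely that $\mathcal T_{\mathcal{AG}}$-algebras absorb this bookkeeping, while yours buys independence from the theory formalism and makes the role of additive inverses in the partial-sum isomorphism completely transparent.
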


The following result was stated in \cite{inverses}; using the fact that the Bousfield-Segal condition gives a group structure, the group-like condition of Segal is satisfied.

\begin{prop}
The category of Bousfield $\Gamma$-spaces is equivalent to the category of infinite loop spaces.
\end{prop}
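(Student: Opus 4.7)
The plan is to show that a $\Gammaop$-diagram $X$ with $X({\bf 0})\cong\Delta[0]$ satisfies the Bousfield-Segal condition if and only if it satisfies the Segal condition and $\pi_0(X({\bf 1}))$ is a group. Once established, both conditions carve out the same full subcategory of $\SSets^{\Gammaop}$, and the preceding proposition on group-like $\Gamma$-spaces identifies this subcategory with infinite loop spaces.

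The first step is a reduction to simplicial spaces. As noted in the preceding subsections, the morphisms $\varphi_{n,k}$ and $\delta^{n,k}$ appearing in the two conditions are the images under ${\bf\Delta}\to\Gamma$ of the maps $\alpha^{n,k}$ and $\gamma^{n,k}$ in ${\bf\Delta}$. Precomposing $X$ with the induced functor $\Deltaop\to\Gammaop$ produces a simplicial space $Y$, and each condition on $X$ translates into the corresponding condition on $Y$ (while $\pi_0(X({\bf 1}))=\pi_0(Y_1)$). It therefore suffices to prove the claim for simplicial spaces with $Y_0\cong\Delta[0]$.

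For the forward direction (Bousfield-Segal implies Segal plus group-like), I would appeal to the result of \cite{inverses} that any Bousfield-Segal group is equivalent to a simplicial group; simplicial groups are in particular group-like simplicial monoids, so both the Segal condition and the group-likeness of $\pi_0(Y_1)$ follow. For the converse, suppose $Y$ is a Segal space with $\pi_0(Y_1)$ a group. At level $n=2$ the Segal map $(d_2,d_0)$ and the Bousfield-Segal map $(d_2,d_1)$ differ by the self-map $(a,b)\mapsto (a,m(a,b))$ of $Y_1\times Y_1$, where $m$ is the homotopy multiplication determined by the Segal condition; this self-map is a weak equivalence because left translation by any element of the $\pi_0$-group is invertible. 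Composing with the Segal equivalence gives the Bousfield-Segal equivalence at $n=2$, and the general case reduces to this by an inductive argument using the Segal decomposition of $Y_n$ as a homotopy product of copies of $Y_1$.

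The main obstacle is making the comparison of the two maps rigorous at the simplicial-set level rather than only on $\pi_0$. I would handle this by working in the homotopy category: under the Segal condition $Y_1$ carries an $A_\infty$-monoid structure, and group-likeness of $\pi_0(Y_1)$ provides a coherent homotopy inverse, so translation by any element is a weak self-equivalence of $Y_1$, hence of $Y_1^n$. Combining the two implications with the previous proposition completes the identification with infinite loop spaces.
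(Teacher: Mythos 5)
Your proposal is correct and follows essentially the same route as the paper, which likewise deduces the result from Segal's proposition on group-like $\Gamma$-spaces by invoking the fact from \cite{inverses} that the Bousfield-Segal condition yields a group structure. The paper's argument is only a one-sentence reduction, so your converse direction (the shear-map argument showing that a group-like Segal space satisfies the Bousfield-Segal condition) fills in a step the paper leaves implicit but which is needed for the two conditions to carve out the same subcategory.
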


\subsection{Algebraic theories}

For strict $\Gamma$-spaces and Bousfield $\Gamma$-spaces, one method of comparison to their respective algebraic structures is via the machinery of algebraic theories.

\begin{definition}
An \emph{algebraic theory} $\mathcal T$ is a small category with finite products and objects denoted $T_n$ for $n \geq 0$. For each $n$, $T_n$ is equipped with an isomorphism $T_n \cong (T_1)^n$. Note in particular that $T_0$ is the terminal object in $\mathcal T$.
\end{definition}

\begin{definition}
Given an algebraic theory $\mathcal T$, a \emph{(strict simplicial)} $\mathcal T$-\emph{algebra} $A$ is a product-preserving functor $A:\mathcal T \rightarrow \mathcal {SS}ets$.
\end{definition}

Here, ``product-preserving" means that for each $n \geq 0$ the canonical map
\[ A(T_n) \rightarrow A(T_1)^n, \]
induced by the $n$ projection maps $T_n \rightarrow T_1$, is an isomorphism of simplicial sets. In particular, $A(T_0)$ is the one-point simplicial set $\Delta [0]$. For a given algebraic theory $\mathcal T$, we denote by $\Algt$ the category of $\mathcal T$-algebras.

The proofs of Propositions \ref{simpabmon} and \ref{simpabgp} can be established by comparing the categories of simplicial abelian monoids and simplicial abelian groups to the categories of algebras over the theories $\mathcal T_{\mathcal{AM}}$ of abelian monoids and $\mathcal T_{\mathcal{AG}}$ of abelian groups, respectively. Each of these categories has as objects the finitely generated free objects in the appropriate category.

\section{Actions of a fixed group on \texorpdfstring{$\Gamma$}{Gamma}-spaces}

In this section, we give a diagrammatic description of infinite loop spaces with a $G$-action, for a given group $G$. The diagram we give is essentially a wedge product of copies of $\Gamma$, indexed by the elements of the group $G$. We assume throughout that a discrete group $G$ is fixed.

Generalizing the case of strict $\Gamma$-spaces, we begin by considering abelian monoids with a $G$-action. There is an algebraic theory $\Tgam$ of abelian monoids with $G$-action; it is the full subcategory of the category of abelian monoids with objects, for each $n \in \mathbb N$, given by $G \times F_n$, where $F_n$ denotes the free abelian monoid on $n$ generators.

Define $G \Gamma^{op}$ to be the category with objects ${\bf n}_G = \vee_{g \in G} {\bf n}_g$, where ${\bf n}_g=\{0_g, 1_g, \ldots, n_g \}$. The morphisms are generated by:
\begin{itemize}
\item morphisms $\vee_{g \in G} f \colon \vee_{g \in G} {\bf n}_g \rightarrow \vee_{g \in G} {\bf m}_g$ where each $f \colon {\bf n}_g \rightarrow {\bf m}_g$ is given by the same morphism $f \colon {\bf n} \rightarrow {\bf m}$ of $\Gamma^{op}$, and

\item automorphisms given by a $G$-action, so, for $g \in G$, $g \cdotp \colon {\bf n}_G \rightarrow {\bf n}_G$ given by $g \cdotp k_h =k_{gh}$.
\end{itemize}

\begin{example}
Consider the case where $G=\mathbb Z/2=\{0,1\}$. Then the objects ${\bf n}_{\mathbb Z/2}$ can be thought of as pointed sets
\[ \{n_0, \ldots, 2_0, 1_0, 0, 1_1, 2_1, \ldots, n_1 \}. \] The action of $\mathbb Z/2$ sends each $k_0$ to $k_1$ and vice versa.
\end{example}

Consider functors $X \colon G \Gamma^{op} \rightarrow \SSets$ such that $X({\bf 0}_G) \cong \Delta[0]$. The Segal maps for $\Gamma$ induce Segal maps $X({\bf n}_G) \rightarrow (X({\bf 1}_G))^n$ for $G\Gamma$, where the behavior on each copy of $\Gamma$ in $G\Gamma$ is the same. Similarly, the Bousfield-Segal maps for $\Gamma$ induce Bousfield-Segal maps for $G\Gamma$.

The proof of the following proposition generalizes the one given in \cite[7.1]{inverses}.

\begin{prop}
The category of functors $X \colon \GGammaop \rightarrow \SSets$ satisfying the strict Segal condition is equivalent to the category of simplicial abelian monoids equipped with a $G$-action.
\end{prop}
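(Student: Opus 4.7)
The plan is to extend the equivalence of Proposition \ref{simpabmon} by incorporating the $G$-action, following the strategy of \cite[7.1]{inverses}. I would describe mutually inverse functors between the two categories. In one direction, given a strict $G\Gamma$-space $X$, set $M = X(\mathbf{1}_G)$. The subcategory of $G\Gamma^{op}$ consisting of the diagonal morphisms $\vee_g f$ is isomorphic to $\Gamma^{op}$, and on this subcategory $X$ restricts to a strict $\Gamma$-space; applying Proposition \ref{simpabmon} (together with the Segal isomorphisms $X(\mathbf{n}_G) \cong M^n$) endows $M$ with the structure of an abelian monoid, with multiplication coming from the fold morphism $\mathbf{2}_G \to \mathbf{1}_G$. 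Independently, the automorphisms $g\cdotp \colon \mathbf{1}_G \to \mathbf{1}_G$ induce a group action on $M$.

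The crucial compatibility---that $G$ acts on $M$ by monoid homomorphisms---follows from the fact that in $G\Gamma^{op}$ any diagonal morphism $\vee_h f$ commutes with every automorphism $g\cdotp$. A direct computation shows both $(g\cdotp) \circ (\vee_h f)$ and $(\vee_h f) \circ (g\cdotp)$ send $i_h$ to $f(i)_{gh}$ (or to the basepoint), because $\vee_h f$ acts identically on each wedge summand while $g\cdotp$ only permutes summands. Applied to the fold map and using the Segal isomorphism $X(\mathbf{2}_G) \cong M^2$ (on which $g\cdotp$ acts diagonally, by the same commutation argument), this yields $g(ab) = g(a) g(b)$ for all $a, b \in M$.

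In the reverse direction, given an abelian monoid $M$ with a $G$-action by monoid homomorphisms, let $Y_M \colon \Gamma^{op} \to \SSets$ be the strict $\Gamma$-space associated to $M$ by Proposition \ref{simpabmon}. Define $X \colon G\Gamma^{op} \to \SSets$ on objects by $X(\mathbf{n}_G) = M^n$, on diagonal morphisms by $X(\vee_g f) = Y_M(f)$, and on $G$-action automorphisms by the $n$-fold diagonal action of $g$ on $M^n$. Well-definedness reduces to verifying that the relation $(g\cdotp) \circ (\vee_h f) = (\vee_h f) \circ (g\cdotp)$ in $G\Gamma^{op}$ is preserved; this is precisely the hypothesis that $G$ acts by monoid homomorphisms. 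Functoriality in morphisms of both categories is straightforward: a natural transformation $X \to X'$ is determined by its component at $\mathbf{1}_G$, which is automatically an equivariant monoid homomorphism, and conversely any such homomorphism extends uniquely.

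The main obstacle is a bookkeeping one: verifying that the presentation of $G\Gamma^{op}$ by the two classes of generators together with the single commutation relation above is complete, so that no further relations need to be checked when defining $X$ on morphisms. This can be handled by observing that every morphism in $G\Gamma^{op}$ admits a canonical form as a diagonal morphism followed by a $G$-action automorphism (the commutation relation lets one move all automorphisms to one side), after which the functoriality verification for the reverse construction reduces to the abelian-monoid-with-$G$-action axioms.
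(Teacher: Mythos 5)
Your construction is correct, but it takes a genuinely different route from the paper's. The paper never works with a presentation of $\GGammaop$: it first replaces ``simplicial abelian monoids with $G$-action'' by the equivalent category $\Alg^{\Tgam}$ of strict algebras over the algebraic theory $\Tgam$, and then compares strict $G\Gamma$-spaces with $\Tgam$-algebras via the natural functor $\GGammaop \rightarrow \Tgam$, ${\bf n}_G \mapsto G \times F_n$, which preserves the projection maps. The key observation there is that a strict $G\Gamma$-space is already determined by its projections together with the single fold map $X({\bf 2}_G) \rightarrow X({\bf 1}_G)$ and the $G$-automorphisms, so the extension functor $t$ (to a $\Tgam$-algebra) and the restriction functor $F$ are mutually inverse. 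You instead black-box Proposition \ref{simpabmon} on the diagonal copy of $\Gamma^{op}$ and handle the $G$-generators by hand via the commutation relation $(g\cdotp)\circ(\vee_h f) = (\vee_h f)\circ(g\cdotp)$ and the resulting canonical form. What the theory approach buys is that all relation-checking is absorbed into functoriality of restriction along $\GGammaop \rightarrow \Tgam$, so the completeness-of-presentation issue you flag as your main obstacle never arises, and the argument runs parallel to the proofs of Propositions \ref{simpabmon} and \ref{simpabgp}, making the Bousfield corollary immediate by the same template with $\mathcal T_{G\mathcal{AG}}$ in place of $\Tgam$. What your approach buys is explicitness: the $G$-action on the multiplication is verified by direct computation rather than imported from the identification of $G$-monoids with $\Tgam$-algebras.

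One small point to patch in your well-definedness argument: the canonical form is not unique. If $f$ is the constant map to the basepoint, then $(g\cdotp)\circ(\vee f) = \vee f$ for \emph{every} $g$ (and similarly every $G$-automorphism of ${\bf 0}_G$ is the identity), so beyond preserving the single commutation relation you must also check that the two representatives have the same image; this holds because the diagonal $G$-action on $M^n$ fixes the image of $Y_M(f)$ for such $f$, i.e., because each $g$ preserves the monoid unit, which follows from your hypothesis that $G$ acts by monoid homomorphisms. It is a one-line fix, but it is an extra case not covered by the commutation relation alone.
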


\begin{proof}
First recall that the category of simplicial abelian monoids with a group action is equivalent to the category $\Alg^{\Tgam}$ of strict algebras over $\Tgam$, so it suffices to establish an equivalence with $\Alg^{\Tgam}$.

In $\GGammaop$, there are projection maps $p_{n,i,G} \colon {\bf n}_G \rightarrow {\bf 1}_G$ where $p_{n,i,G}(k_g) = 1_g$ if $k=i$ and 0 otherwise. The natural functor $f \colon \GGammaop \rightarrow \Tgam$, given by ${\bf n}_G \mapsto G\times F_n$, preserves these projection maps.

A strict $G\Gamma$-space $X \colon G\Gamma^{op} \rightarrow \SSets$ is determined by each simplicial set $X({\bf n}_G)$, the projection maps $X({\bf n}_G) \rightarrow X({\bf 1}_G)$, and the map $X({\bf 2}_G) \rightarrow X({\bf 1}_G)$ which is the image of the map ${\bf 2}_G \rightarrow {\bf 1}_G$ given by $0 \mapsto 0$ and $1_g, 2_g \mapsto 1_g$ for each $g \in G$. In particular, by induction the map $X({\bf 2}_G) \rightarrow X({\bf 1}_G)$ induces all maps $X({\bf n}_G) \rightarrow X({\bf 1}_G)$ arising from the projections ${\bf n}_G \rightarrow {\bf 1}_G$ given by $0 \mapsto 0$ and $i_g \mapsto 1_g$ for each $1 < i \leq n$ and $g \in G$. Then the structure of a strict $G\Gamma$-space gives the space $X({\bf 1}_G)$ the structure of an abelian monoid (with multiplication map given by $X({\bf 2}_G) \rightarrow X({\bf 1}_G)$ as above), with a $G$-action given by the map $G \times X({\bf 1}_G) \rightarrow X({\bf 1}_G)$ induced from $G \times {\bf 1}_G \rightarrow {\bf 1}_G$ defined by $(g, 1_h) \mapsto 1_{gh}$.

In particular, from the simplicial set $X({\bf 1}_G)$ we can produce a $\Tgam$-algebra $tX : \Tgam \to \SSets$ given by $G\times F_n \mapsto X({\bf n}_G)$ for each $n$.
The projection maps agree with those of $X$, that is $X(p_{n,i,G})$ coincides with the map coming from the $i$th projection $G\times F_n \to G\times F_1$ in
\[
tX(G\times F_n) = X(\mathbf n_G) \to  X(\mathbf 1_G) = tX(G\times F_1).
\]
Restricting $tX$ along $f \colon \GGammaop \rightarrow \Tgam$ produces the original $G\Gamma$-space $X$. Denoting this restriction map by $f^*$, we have shown that the functors $t$ and $f^*$ are inverse to one another.
\end{proof}

\begin{cor}
The category of functors $X \colon \GGammaop \rightarrow \SSets$ satisfying the strict Bousfield-Segal condition is equivalent to the category of simplicial abelian groups equipped with a $G$-action.
\end{cor}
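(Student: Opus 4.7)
The plan is to mimic the proof of the preceding proposition, swapping the Segal condition for the Bousfield-Segal condition and the theory $\Tgam$ for the algebraic theory $\mathcal{T}_{G\mathcal{AG}}$ of abelian groups with $G$-action---that is, the full subcategory of abelian groups with $G$-action on the objects $G \times F_n'$, where $F_n'$ is the free abelian group on $n$ generators. Since, by Proposition~\ref{simpabgp} and its $G$-equivariant analogue, simplicial abelian groups with $G$-action are equivalent to strict $\mathcal{T}_{G\mathcal{AG}}$-algebras, it suffices to produce an equivalence between functors $\GGammaop \rightarrow \SSets$ satisfying the strict Bousfield-Segal condition and $\Alg^{\mathcal{T}_{G\mathcal{AG}}}$.

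First I would transport the maps $\delta^{n,k}$ from $\Gamma^{op}$ into $\GGammaop$ by applying them on each wedge summand to obtain Bousfield projections $\delta^{n,k,G} \colon {\bf n}_G \rightarrow {\bf 1}_G$, and verify that the natural functor $\GGammaop \rightarrow \mathcal{T}_{G\mathcal{AG}}$ sending ${\bf n}_G \mapsto G \times F_n'$ preserves these maps. The key observation is then that a strict Bousfield $G\Gamma$-space $X$ is determined by the simplicial set $X({\bf 1}_G)$, the Bousfield projections, the $G$-action obtained from the automorphisms $g\cdotp$ of $\GGammaop$, and a single ``difference'' map $X({\bf 2}_G) \rightarrow X({\bf 1}_G)$ arising from a specific non-projection morphism in $\GGammaop$ applied uniformly to each wedge summand. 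Following the discussion surrounding Figure~\ref{F:onlyonefigure}, this map encodes the binary operation $(a,b) \mapsto ab^{-1}$, and together with the strict Bousfield condition it equips $X({\bf 1}_G)$ with an abelian group structure compatible with the $G$-action.

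The remaining steps parallel the previous proof: an induction on $n$ shows that the full structure at ${\bf n}_G$ is generated from these data, producing a functor $t$ from strict Bousfield $G\Gamma$-spaces to $\mathcal{T}_{G\mathcal{AG}}$-algebras, while the restriction functor $F$ in the opposite direction recovers $X$, and the two are mutually inverse. The main obstacle I anticipate is the bookkeeping required to keep track of the interaction between the $G$-action and the Bousfield ``difference'' map across the various copies ${\bf n}_g$: one must check that the $G$-equivariance built into $\GGammaop$ corresponds precisely to the $G$-equivariance of the abelian group operations on $X({\bf 1}_G)$, so that the resulting object is genuinely a $G$-equivariant abelian group rather than a weaker structure where inversion and the action are only compatibly but not strictly equivariant.
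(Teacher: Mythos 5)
Your proposal is correct and is essentially the argument the paper intends: the statement is given as a corollary precisely because the proof of the preceding proposition carries over once the Segal projections are replaced by the Bousfield-Segal maps $\delta^{n,k}$ and $\Tgam$ by the theory of abelian groups with $G$-action, following \cite[7.2]{inverses} just as you describe. Your identification of the difference map encoding $(a,b)\mapsto ab^{-1}$ and the check that the $G$-action on $X({\bf 1}_G)$ interacts strictly with the group structure are the right ingredients and match the paper's implicit reasoning.
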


\begin{prop}
A functor $X \colon \GGammaop \rightarrow \SSets$ satisfying the (homotopy) Segal condition determines a spectrum with a $G$-action.
\end{prop}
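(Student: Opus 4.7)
The plan is to reduce to Segal's original infinite loop space machine by extracting an ordinary $\Gamma$-space with a compatible $G$-action from $X$. The starting observation is that the generating morphisms of $G\Gammaop$ come in two families: the ``diagonal'' morphisms $\vee_{g} f \colon \vee_{g} {\bf n}_g \rightarrow \vee_{g} {\bf m}_g$ and the $G$-action automorphisms $g \cdotp \colon {\bf n}_G \rightarrow {\bf n}_G$, and these two families commute, since applying the same $f \in \Gammaop$ on each wedge summand commutes with permuting the labels of those summands. Consequently, restricting $X$ along the subcategory spanned by the morphisms of the first kind produces a functor $Y \colon \Gammaop \rightarrow \SSets$ defined by $Y({\bf n}) = X({\bf n}_G)$, while the $G$-action automorphisms endow $Y$ with a natural $G$-action by $\Gamma$-space automorphisms.

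The next step is to verify that $Y$ is a $\Gamma$-space in the sense of the earlier definition. The condition $Y({\bf 0}) = X({\bf 0}_G) \cong \Delta[0]$ is immediate from the assumption on $X$, and the Segal map for $Y$ at level $n$ is precisely the Segal map $X({\bf n}_G) \rightarrow X({\bf 1}_G)^n$ for the $G\Gamma$-space $X$, which is a weak equivalence by hypothesis. Thus $Y$ is a $\Gamma$-space together with a strictly commuting $G$-action, i.e.\ a functor from $G$ (viewed as a one-object category) into the category of $\Gamma$-spaces.

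I would then apply Segal's construction, which prolongs a $\Gamma$-space to a functor on pointed simplicial sets and extracts the spectrum whose $n$-th space is $|Y(S^n)|$ with structure maps induced from the simplicial circle. Since this construction is functorial in the $\Gamma$-space $Y$, the $G$-action on $Y$ transports to a $G$-action on the resulting spectrum by spectrum maps, yielding the desired spectrum with a $G$-action.

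The main obstacle is really a bookkeeping one: checking carefully that relations among the two families of generators in $G\Gammaop$ are exhausted by the commutativity $(\vee_{g} f) \circ (g' \cdotp) = (g' \cdotp) \circ (\vee_{g} f)$, so that a $G\Gamma$-space is indeed the same data as a $\Gamma$-space equipped with a strict $G$-action, and that Segal's spectrum construction is sufficiently functorial to transfer this action. Once those two points are secured, the proposition follows immediately from the classical result that $\Gamma$-spaces give rise to spectra.
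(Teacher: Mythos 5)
Your argument is correct in substance, and its first half is in fact the same move the paper makes: your $\Gamma$-space $Y$ with $Y({\bf n})=X({\bf n}_G)$ is exactly the restriction $e^*X$ along the diagonal functor $e \colon \Gammaop \rightarrow \GGammaop$ that the paper uses, and since the proposition only needs the forward direction (from a $G\Gamma$-space to a $\Gamma$-space with strict $G$-action), your worry about whether the relations in $\GGammaop$ are exhausted by the commutation relation can be sidestepped entirely; it would matter only for the converse identification, and even there the only failure of unique factorization $(g \cdotp) \circ (\vee_g f)$ occurs for maps factoring through ${\bf 0}_G$, which is harmless because $X({\bf 0}_G) \cong \Delta[0]$. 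Where the two proofs genuinely diverge is in how the spectrum is produced and why its structure maps are equivariant. The paper stays inside $\GGammaop$: it forms the bi-$G\Gamma$-space $\widehat X$, checks $\widehat{e^*X} = (e \times e)^*\widehat X$, concludes that the classifying space $B(e^*X)$ is again a $G\Gamma$-space, and iterates to get $B^nX({\bf 1}_G)$; it then confronts the delicate point head-on, namely that in Segal's original machine the structure map is defined by choosing a homotopy inverse to $\Sigma X({\bf 1}) \xrightarrow{\simeq} \sk_1|X|$, a choice that need not be $G$-equivariant, and it resolves this by using the strict hypothesis $X({\bf 0}_G) \cong \Delta[0]$ to upgrade that equivalence to an isomorphism, so the structure map becomes the honest $G$-equivariant inclusion $\Sigma X({\bf 1}_G) \cong \sk_1|X| \rightarrow |X|$. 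You instead invoke the prolongation formulation, with $n$-th space $|Y(S^n)|$ and assembly maps $S^1 \wedge Y(S^n) \rightarrow Y(S^{n+1})$; this works, but precisely because the assembly maps are strictly natural in $Y$, which is what makes the blanket appeal to ``functoriality'' legitimate. You should make that point explicit: with the $B$-iteration version of the machine (the one the paper uses, and the one in Segal's paper), functoriality alone does not suffice, and the equivariance of the structure maps is exactly the step requiring either the strict pointedness argument or the switch to a machine with natural structure maps. Each approach buys something: the paper's argument keeps the entire delooping construction equivariant at the level of $G\Gamma$-diagrams, which fits its broader diagrammatic program, while your reduction to ``$\Gamma$-space with strict $G$-action plus a functorial machine'' is shorter and generalizes immediately to any strictly functorial infinite loop space machine.
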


\begin{proof}
Suppose that $X$ is a $G\Gamma$-space. Then $X({\bf 0}_G)$ is a point and the Segal maps $X({\bf n}_G) \rightarrow X({\bf 1}_G)^n$ are weak equivalences for $n \geq 2$. To construct the classifying space of a $\Gamma$-space $X$, Segal first defines a bi-$\Gamma$-space $\widehat X({\bf m},{\bf n})= X({\bf m} \wedge {\bf n})$. Then the classifying space $BX$ is obtained by composing the adjoint of the composite map
\[ \Deltaop \times \Gammaop \rightarrow \Gammaop \times \Gammaop \rightarrow \SSets \]
with ordinary geometric realization of a simplicial space to get
\[ \Gammaop \rightarrow \SSets^{\Deltaop} \rightarrow \SSets. \]

Since we require that $X({\bf 0})$ be a point, we should verify that this condition still holds for the classifying space. The space $BX({\bf 0})$ is the geometric realization of the functor ${\bf n} \mapsto X({\bf 0} \wedge {\bf n})$. Since $X({\bf 0} \wedge {\bf n}) = X({\bf 0})$, we have that $BX({\bf 0})$ is the geometric realization of the constant simplicial space of a point.

Now we consider the case at hand, where we have functors $\Deltaop \rightarrow \Gammaop$ and $e \colon \Gammaop \rightarrow G\Gammaop$. If $X$ is a $G\Gamma$-space, we can perform the same construction as above to get a bi-$G\Gamma$-space defined by $({\bf n}_G, {\bf m}_G) \mapsto X({\bf n}_G \wedge {\bf n}_G)$, which we denote by $\widehat X$. Observing that $\widehat{e^*X} = (e \times e)^* \widehat X$, we have the commutative diagram
\[ \xymatrix{\Deltaop \times \Gammaop \ar[dd] \ar[r] & \Gammaop \times \Gammaop \ar[dd] \ar[dr]^{\widehat{e^*X}} & \\
&& \SSets. \\
\Deltaop \times G\Gammaop \ar[r] & G\Gammaop \times G\Gammaop \ar[ur]_{\widehat X} &} \]
Taking adjoints, we have the commutative diagram
\[ \xymatrix{\Gammaop \ar[dd] \ar[rd] && \\
& \SSets^{\Deltaop} \ar[r]^{|-|} & \SSets. \\
G\Gammaop \ar[ur] &&} \]
We then see that $B(e^*X)$ naturally has the structure of a $G\Gamma$-space, and we denote it $BX$.

Iterating this construction, we obtain a sequence of spaces
\[ X({\bf 1}_G), BX({\bf 1}_G), B^2X({\bf 1}_G), \ldots \]
which is a spectrum since $X$ is a $\Gamma$-space. Each $B^nX({\bf 1}_G)$ here is also equipped with a $G$-action, so it remains to show that the spectrum structure maps preserve this action.

In the ordinary case of $\Gamma$-spaces, Segal shows that the 1-skeleton of $X$, $\sk_1|X|$, is homotopy equivalent to $\Sigma X({\bf 1})$; the argument can be given using a diagram such as the following:
\[ \xymatrix{(\ast \amalg (\Delta^1 \times X_1))/\sim \ar[d]_= & ((\Delta^0 \times X_0) \amalg (\Delta^1 \times X_1))/ \sim \ar[d]^= \ar[l] & \\
\Sigma X({\bf 1}) & \sk_1|X| \ar[l]_\simeq \ar[r] & |X|.} \]
Since $X({\bf 0})$ is contractible, the upper horizontal map is a homotopy equivalence, and therefore so is the indicated lower horizontal map. The spectrum structure map is given by choosing a homotopy inverse to this map and postcomposing with the inclusion $\sk_1|X| \rightarrow |X|$.

In our case, we have imposed the more restrictive condition that $X({\bf 0}_G)= \ast$. It follows that $\Sigma X({\bf 1}_G)$ is isomorphic to $\sk_1|X|$ (not just weakly equivalent), so the structure map is given by the inclusion $\Sigma X({\bf 1}_G) \cong \sk_1|X| \rightarrow |X| = BX({\bf 1}_G)$, which is $G$-equivariant.

\end{proof}

\begin{cor}
If $X \colon G\Gamma^{op} \to \SSets$ satisfies the Bousfield-Segal condition, then $X(\mathbf{1})$ is an infinite loop space with $G$-action.
\end{cor}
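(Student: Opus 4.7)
The plan is to reduce the corollary to the preceding proposition, by verifying two things: first, that a Bousfield-Segal $G\Gamma$-space is in particular a Segal $G\Gamma$-space (so the proposition applies and produces a spectrum with $G$-action), and second, that this spectrum is actually an $\Omega$-spectrum (so $X({\bf 1}_G)$ is genuinely an infinite loop space, not merely the zeroth space of a spectrum).

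For the first point, I would restrict $X$ along the natural functor $\iota \colon \Gammaop \to \GGammaop$ sending ${\bf n}$ to ${\bf n}_G$ and $f \colon {\bf n} \to {\bf m}$ to $\vee_{g \in G} f$. By the description of the $G\Gamma$ (Bousfield-)Segal maps as those induced from $\Gamma$, the Bousfield-Segal condition on $X$ is exactly the Bousfield-Segal condition on the pullback $\iota^* X$. Thus $\iota^* X$ is a Bousfield $\Gamma$-space, so by the Proposition identifying Bousfield $\Gamma$-spaces with infinite loop spaces, it is group-like and in particular satisfies the ordinary Segal condition. Since the Segal maps of $X$ and of $\iota^* X$ likewise coincide, $X$ itself satisfies the Segal condition for $\GGammaop$, and the previous proposition yields a spectrum $\{B^k X({\bf 1}_G)\}_{k \geq 0}$ equipped with a $G$-action.

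For the second point, observe that after forgetting the $G$-action this spectrum coincides, by naturality of the classifying space construction in $\iota$, with the classical Segal spectrum of $\iota^* X$. Since $\iota^* X$ is group-like, Segal's theorem guarantees this is an $\Omega$-spectrum, so each structure map is a weak equivalence. Combined with the $G$-equivariance of the structure maps established in the preceding proof, we conclude that $X({\bf 1}_G)$ is an infinite loop space equipped with a compatible $G$-action. The main obstacle is the naturality bookkeeping needed to confirm that the equivariant deloopings really agree (non-equivariantly) with those of $\iota^* X$; this reduces to checking that the bi-object $\widehat{\iota^* X}$ is the restriction of $\widehat X$ along $\iota \times \iota$, which is tautological from the diagram in the preceding proof. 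Once this is in hand, both points reduce to already-cited results about ordinary $\Gamma$-spaces.
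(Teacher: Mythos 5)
Your proposal is correct and follows exactly the route the paper intends: the corollary is stated without proof, as an immediate consequence of the preceding proposition (Segal $G\Gamma$-spaces give spectra with $G$-action) combined with the earlier proposition from \cite{inverses} that the Bousfield-Segal condition forces the group-like condition, hence an $\Omega$-spectrum. Your restriction along $e \colon \Gammaop \to \GGammaop$ and the observation that $\widehat{e^*X} = (e\times e)^*\widehat{X}$ simply make explicit the bookkeeping the paper already carries out inside the proof of that proposition, so there is no genuinely different argument here.
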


\section{Varying group actions on \texorpdfstring{$\Gamma$}{Gamma}-spaces}

In this section, we consider a diagram which encodes group actions on $\Gamma$-spaces, where the groups may vary.  The emphasis of this section is somewhat different from the previous one, in that our goal is to find a diagram and Segal conditions which encode both the $\Gamma$-space and the acting group.  We give some preliminary results in this section, in particular when the Segal maps are isomorphisms; a full treatment would require substantially more subtle tools.

Note that we have chosen, for simplicity, to work with actions on $\Gamma$-spaces, in contrast with the previous section where we studied actions on Bousfield $\Gamma$-spaces; the interested reader may modify Definition \ref{dgspace} (2) to ensure the relevant group-like condition holds.

We define a category $\mathcal L$ whose objects are functors $\mathcal C \rightarrow \Fin$, where $\mathcal C$ is a category (which can vary) with finitely many objects and $\Fin$ denotes (a skeleton of) the category of finite sets. Let $\mathcal P \colon \Fin \rightarrow \Fin$ be the functor taking a finite set to its power set. Define a morphism in $\mathcal L$ from $F \colon \mathcal C \rightarrow \Fin$ to $G \colon \mathcal D \rightarrow \Fin$ to be a pair $(f,\eta)$ consisting of:
\begin{itemize}
\item a functor $f \colon \mathcal C \rightarrow \mathcal D$, and

\item a natural transformation $\eta \colon F \Rightarrow \mathcal PGf$,
\end{itemize}
subject to the condition that if $c,c'\in (\ob f)^{-1}(d)$, $x\in F(c)$, and $y\in F(c')$, then $\eta_c (x), \eta_{c'}(y) \in \mathcal PG(d)$ have empty intersection unless $c=c'$ and $x=y$.

Given such a functor $F$, define its \emph{total space} to be
\[ \tau(F) = \coprod_{c \in \ob \mathcal C} F(c) \]
which is precisely the finite set that $\mathcal C$ acts upon. Thus, the above condition for morphisms $(f,\eta) \colon F \rightarrow G$ in $\mathcal L$ can be alternatively rephrased to say that the composite map
\[ \theta \colon \xymatrix@1{\tau(F) \ar[r]^-{\coprod \eta_c} & \tau(\mathcal PGf) \ar[r] & \tau(\mathcal PG) = \coprod_{d \in \ob(\mathcal D)} \mathcal PG(d) \ar[r] & \mathcal P(\tau(G))} \]
must satisfy that $\theta(x)$ and $\theta(y)$ are disjoint whenever $x \neq y$. The fact that composition in this category is well-defined can be established using the monadic structure of the endofunctor $\mathcal P \colon \Fin \to \Fin$.

Observe that if $\mathcal C$ and $\mathcal D$ are both the trivial category $*$ with one object and an identity morphism only, then we recover the category $\Gamma$. One can also see that, for each discrete group $G$, the category $G\Gamma$ as described in the previous section can be identified with the subcategory of $\mathcal L$ whose objects are functors $F\colon G^{op} \to \Fin$ so that $F(*)$ is a \emph{free} $G$-set, with morphisms restricted to those for which $f \colon G^{op} \to G^{op}$ is the identity map.

Consider the functor $C_{0,-} \colon \Gamma \rightarrow \mathcal L$ which sends an object $S$ of $\Gamma$ to the object $C_{0,S} \colon [0] \rightarrow \Fin$ satisfying $C_{0,S}(0) = S$.
Observe that
\[ \Hom_\Gamma(S,T) = \Hom_{\mathcal L} (C_{0,S}, C_{0,T}). \]
Similarly, consider the functor $C_{-,0} \colon \Delta \rightarrow \mathcal L$ which sends an object $[n]$ of $\Delta$ to the constant functor $C_{n,0} \colon [n] \rightarrow \Fin$ with $C_{n,0}(i) = \varnothing$ for all $i$.
Since there is a unique set map $\varnothing \rightarrow \mathcal P (\varnothing)$, we have
\[ \Hom_\Delta([n],[m]) = \Hom_{\mathcal L} (C_{n,0}, C_{m,0}). \]

For $m,n$ nonnegative integers, let $C_{m,n} \colon [m] \to \Fin$ be the constant functor on the set $\underline n = \{1,\dots, n\}$. Let $\Deltaactgamma$ denote the full subcategory of $\mathcal L$
with
\[ \ob (\Deltaactgamma) = \left\{ C_{m,n} \right\}_{m,n\geq 0}. \]
If $X \colon (\Deltaactgamma)^{op} \rightarrow \SSets$ is a functor, we use the notation
\[ X^\Gamma = X \circ C_{0,-} \colon \Gammaop \rightarrow \SSets \]
and
\[ X^\Delta = X \circ C_{-,0} \colon \Deltaop \rightarrow \SSets. \]

There is a unique morphism $\ell_1 = (f,\eta) \colon C_{m,0} \to C_{m,n}$ with $f = \id_{[m]}$.
Also consider the morphism
\[ \ell_2 = (f,\eta) \colon C_{0,n} \to C_{m,n} \]
such that $f \colon [0] \to [m]$ takes the object $0$ to $m$ and the natural transformation
$\eta \colon C_{m,0} \Rightarrow \mathcal P C_{m,n} f$ is given by
\begin{align*}
\eta_0 \colon C_{0,n}(0) = \underline n & \to \mathcal P (\underline n) = C_{m,n}(m) \\
i &\mapsto \{ i\}.
\end{align*}
If $X \colon (\Deltaactgamma)^{op} \rightarrow \SSets$ is any functor, then there is a map
\[ \ell_1 \times \ell_2 \colon X(C_{m,n}) \to X(C_{m,0}) \times_{X(C_{0,0})} X(C_{0,n}) = X^\Delta_m \times_{X^\Delta_0} X^\Gamma(\mathbf n). \]

We use these maps to make the following definition.

\begin{definition} \label{dgspace}
A $\Deltaactgamma$-\emph{space} is a functor $X \colon (\Deltaactgamma)^{op} \rightarrow \SSets$ satisfying:
\begin{enumerate}
\item the space $X(C_{0,0}) = X^\Delta_0 = X^\Gamma(\mathbf 0)$ is a single point;

\item the induced Segal maps $X^\Gamma(\mathbf n) \rightarrow X^\Gamma(\mathbf 1)^n$ are weak equivalences;

\item \label{bousfield-segal} the induced Bousfield-Segal maps $X^\Delta_n \rightarrow (X^\Delta_1)^n$ are weak equivalences;

\item \label{action split} the maps $\ell_1 \times \ell_2 \colon X(C_{m,n}) \to X^\Delta_m \times X^\Gamma(\mathbf n)$ are weak equivalences for all $m,n$.
\end{enumerate}
\end{definition}

Also consider the morphism
\[ \ell_3 = (f,\eta) \colon C_{0,n} \to C_{m,n} \]
with $f(0) = 0$ and
\begin{align*}
\eta_0 \colon C_{0,n}(0) = \underline n & \to \mathcal P (\underline n) = C_{m,n}(0) \\
i &\mapsto \{ i\}.
\end{align*}
This structure,
\begin{equation} \label{EQaction} \includegraphics{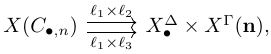} \end{equation}
should be interpreted as giving a higher-order action of $X^\Delta$ on $X^\Gamma(\mathbf n)$, in the spirit of \cite[5.1]{prezma2}.

\begin{remark}
An explanation is in order regarding this definition. Consider, for the moment, the setting where we require isomorphisms throughout, rather than just weak equivalences. Then $X^\Gamma(\mathbf 1)$ is a simplicial abelian monoid and $X^\Delta_1$ is a simplicial group. Condition \eqref{action split} allows us to define an action of $X^\Delta_1$ on $X^\Gamma(\mathbf 1)$, via the diagram
\[ \begin{tikzcd}
X(C_{1,1}) \dar{\ell_3} \rar{\ell_1 \times \ell_2}[swap]{\cong} &
X^\Delta_1 \times X^\Gamma(\mathbf 1) \\
X^\Gamma(\mathbf 1).
\end{tikzcd} \]
One can check that this binary operation is actually an action of $X^\Delta_1$ on $X^\Gamma(\mathbf 1)$.
\end{remark}

Let us now consider how to obtain a spectrum from such a structure, following Segal's construction for $\Gamma$-spaces.

There is a bifunctor
\[ \mu \colon \Gamma \times \mathcal L \to \mathcal L \]
which, on objects, takes $(S, F \colon \mathcal{C} \to \Fin)$ to the functor
\[ S \times F \colon \mathcal{C} \rightarrow \Fin \]
given by $i \mapsto S \times F(i)$. If $(f,\eta) \colon F \to G$ is a morphism of $\mathcal L$ and $S$ is an object of $\Gamma$, we get a new morphism $(f, S\times \eta)$ of $\mathcal L$ by setting
\[ (S\times \eta)_i \colon S \times F(i) \overset{\eta_i}\to S \times \mathcal P(Gf(i)) \to \mathcal P(S\times Gf(i)). \]
If $F \colon \mathcal{C} \to \Fin$ is an object of $\mathcal L$ and $\theta \colon S \to \mathcal PT$ is in $\Hom_{\Gamma} (S,T)$,
we get a morphism $(\id_{\mathcal{C}}, \theta \times F) \colon S\times F \to T \times F$ in $\mathcal L$ by the composite
\[ (\theta \times F)_i \colon S \times F(i) \overset{\theta}\to \mathcal P(T) \times F(i) \to \mathcal P(T \times F(i)). \]
Note that $\mu$ induces a bifunctor $\Gamma \times (\Deltaactgamma) \to \Deltaactgamma$ with $\mu( \underline p, C_{m,n}) = C_{m,np}$.

Suppose $X$ is a $\Deltaactgamma$-space; we are now able to define the classifying space $BX$.
The composite
\[ \Deltaop \times (\Deltaactgamma)^{op} \to \Gamma^{op} \times (\Deltaactgamma)^{op} \overset{\mu}\to (\Deltaactgamma)^{op} \overset{X}\rightarrow \SSets \]
has adjoint
\[ bX \colon (\Deltaactgamma)^{op} \to \SSets^{\Deltaop}; \]
composing with geometric realization $\SSets^{\Deltaop} \to \SSets$ gives a new functor
\[ BX \colon (\Deltaactgamma)^{op} \to \SSets.\]

\begin{lemma}\label{lemma Gamma interchange}
Let $X \colon (\Deltaactgamma)^{op} \to \SSets$ be a functor. Then
\[ B(X^\Gamma) \cong (BX)^\Gamma.\]
\end{lemma}

\begin{proof}
We have the following commutative diagram:
\[ \begin{tikzcd}
\Deltaop \times \Gamma^{op} \dar{\id \times C_{0,-}} \rar & \Gamma^{op} \times \Gamma^{op} \dar{\id \times C_{0,-}} \rar{\mathbf{m}, \mathbf{n} \mapsto \mathbf{m} \wedge \mathbf{n}} & \Gamma^{op} \dar{C_{0,-}} \arrow{dr}{X^\Gamma} \\
\Deltaop \times (\Deltaactgamma)^{op} \rar & \Gamma^{op} \times (\Deltaactgamma)^{op} \rar[swap]{\mu^{op}} & (\Deltaactgamma)^{op} \rar[swap]{X} & \SSets.
\end{tikzcd} \]
Taking adjoints yields the commutative triangle on the left in the diagram:
\[ \begin{tikzcd}
\Gamma^{op} \dar[swap]{C_{0,-}} \arrow{dr} \arrow[bend left]{drr}{B(X^\Gamma)} \\
(\Deltaactgamma)^{op} \rar{bX} \arrow[bend right]{rr}[swap]{BX} & \SSets^{\Deltaop} \rar{|-|} & \SSets.
\end{tikzcd} \]
Thus $(BX)^\Gamma = BX \circ C_{0,-} = B(X^\Gamma)$.
\end{proof}

A consequence of this lemma is that if $X$ is a $\Deltaactgamma$-space, then $BX$ satisfies the first two conditions of Definition \ref{dgspace}.

\begin{lemma}
Let $X \colon (\Deltaactgamma)^{op} \to \SSets$ be a functor. Then
\[ X^\Delta \cong (BX)^\Delta. \]
\end{lemma}

\begin{proof}
We show that $bX(C_{m,0})$ is a constant simplicial space. Since $S \times \varnothing = \varnothing$ for any set $S$, the functor
\[ \mu(-,C_{m,0}) \colon \Gamma \to \Deltaactgamma \]
is constant with value $C_{m,0}$. Now, for any $n$ we have
\[ (bX(C_{m,0}))_n = X(\mu(\underline n,C_{m,0})) = X(C_{m,0}) = X^\Delta_m. \]
Thus $(BX)^\Delta_m = |bX(C_{m,0})| = X^\Delta_m$.
\end{proof}

An immediate consequence is that $X$ satisfies condition \eqref{bousfield-segal} of Definition \ref{dgspace} if and only if $BX$ does.

\begin{theorem}
If $X$ is a $\Deltaactgamma$-space, then so is $BX$.
\end{theorem}

\begin{proof}
In light of the previous two lemmas, we need only show that condition \eqref{action split} holds for $BX$.
We will show that the map of simplicial spaces
\begin{equation}\label{simplicial spaces map}
\ell_1 \times \ell_2 \colon bX(C_{m,n}) \to bX(C_{m,0}) \times bX(C_{0,n})
\end{equation}
is a levelwise weak equivalence, which implies that we have a weak equivalence after geometric realization.
Notice that $\mu(\underline p, C_{m,n}) \cong C_{m,np}$.
For each $p$ we then have the diagram
\[ \begin{tikzcd}
(bX(C_{m,n}))_p \rar{\cong} \dar & X(C_{m,np}) \dar{\simeq} \\
(bX(C_{m,0}))_p \times (bX(C_{0,n}))_p \rar{\cong} & X(C_{m,0}) \times X(C_{0,np}),
\end{tikzcd} \]
where the map on the right is a weak equivalence by assumption. Hence, the map on the left is a weak equivalence.
We thus have established that the map \eqref{simplicial spaces map} is a levelwise weak equivalence, hence its geometric realization
\[ BX(C_{m,n}) \to BX(C_{m,0}) \times BX(C_{0,n}) \]
is a weak equivalence by \cite[15.11.11]{hirschhorn}, so we see that condition \eqref{action split} of Definition \ref{dgspace} holds for $BX$.
\end{proof}

Notice that the sequence $X^\Gamma(\mathbf 1), BX^\Gamma(\mathbf 1), B^2X^\Gamma(\mathbf 1), \dots$ forms a spectrum of spaces $\mathbf B X^\Gamma$ as in \cite{segal}, using Lemma \ref{lemma Gamma interchange}.
As mentioned above, the map of simplicial spaces
\[ A_\bullet = X(C_{\bullet,1}) \to X^\Delta \times X^\Gamma(\mathbf 1) \]
should be interpreted (cf. \eqref{EQaction}) as providing an action of the Bousfield-Segal group $X^\Delta$ on $X^\Gamma(\mathbf 1) = A_0$, using the higher action map $d_1\dots d_m : A_m \to A_0$ :\[ \begin{tikzcd}
A_m \dar[swap]{d_1\dots d_m} \rar{\cong} &
X(C_{m,1}) \dar{\ell_3} \rar{\ell_1 \times \ell_2}[swap]{\simeq} &
X^\Delta_m \times X^\Gamma(\mathbf 1) \\
A_0 \rar{\cong} &
X^\Gamma(\mathbf 1).
\end{tikzcd} \]
This action is compatible with the $B$ construction as follows.  Let $i_1 \colon \sk_1 \Delta \to \Delta$ be the full subcategory inclusion, where $\ob(\sk_1 \Delta) = \{ [0], [1] \}$, and write $\sk_1 BX$ for the composite
\[ \Deltaactgamma^{op} \overset{bX}\longrightarrow \SSets^{\Deltaop} \overset{i_{1*}}\longrightarrow \SSets^{\sk_1\Delta^{op}} \overset{i_1^*}\longrightarrow \SSets^{\Deltaop} \overset{|-|}\longrightarrow \SSets \]
(see \cite[\S IV.3.2]{gj}).
Compatibility is given by the diagram
\[ \begin{tikzcd}
\Sigma X^\Gamma(\mathbf 1) \arrow{d}{\cong} &
&
X_m^\Delta \times \Sigma X^\Gamma(\mathbf 1) \arrow{d}{\cong} \\
\sk_1 BX (C_{0,1}) \dar &
\sk_1 BX (C_{m,1}) \arrow{r}{\ell_1 \times \ell_2} \arrow{l}[swap]{\ell_3} \dar &
\sk_1 BX(C_{m,0}) \times \sk_1 BX(C_{0,1}) \dar \\
BX (C_{0,1}) \dar{=} &
BX (C_{m,1}) \arrow{r}{\ell_1 \times \ell_2} \arrow{l}[swap]{\ell_3} &
BX(C_{m,0}) \times BX(C_{0,1}) \dar{=} \\
BX^\Gamma(\mathbf 1) &
&
X_m^\Delta \times BX^\Gamma(\mathbf 1).
\end{tikzcd} \]
Thus it seems reasonable to say that the Bousfield-Segal group $X^\Delta$ acts levelwise on the spectrum $\mathbf B X^\Gamma$.  We expect that a proof would require techniques such as those of \cite{prasma}.

\begin{remark}
Observe that there are still several questions to be answered here.  First, in order to provide a precise definition of an action of a Bousfield-Segal group on a space, we first need to address a more fundamental question, namely the close relationship between Segal groups as defined by Prasma \cite{prasma} and Bousfield-Segal groups as we have defined them here, following \cite{inverses}.  We expect that the two notions are equivalent, but have not given a proof.

Second, we would like to have an explicit way to regard a $G\Gamma$-space in the sense of the previous section as a special case of our construction here.
In particular, can one construct a functor
\[
\SSets^{G\Gamma^{op}} \to \SSets^{\Deltaactgamma^{op}}
\]
which takes a functor satisfying the (homotopy) Segal condition to a $\Deltaactgamma$-space?
\end{remark}

\end{document}